\documentclass[11pt,reqno]{amsart}

\numberwithin{equation}{section}

\usepackage{color,graphics,epsfig}

\newcommand{\calE}{\mathcal{E}}

\newcommand{\mC}{\mathbb{C}}
\newcommand{\mD}{\mathbb{D}}

\newcommand{\mN}{\mathbb{N}}

\newcommand{\mR}{\mathbb{R}}

\newcommand{\mZ}{\mathbb{Z}}

\newtheorem{theorem}{Theorem}[section]
\newtheorem{lemma}[theorem]{Lemma}
\newtheorem{corollary}[theorem]{Corollary}
\newtheorem{proposition}[theorem]{Proposition}

\theoremstyle{definition}

\newtheorem{example}[theorem]{Example}

\theoremstyle{definition}

\theoremstyle{definition}

\begin{document}

\keywords{rings of distributions, compactly supported distributions,
  Fourier-Laplace transform}

\subjclass{Primary 46E10; Secondary 93D15, 46F05}

\title[Generators for rings of distributions]{Generators for rings of
  compactly supported distributions}

\author{Sara Maad Sasane}
\address{Department of Mathematics, Stockholm University, Sweden.}
\email{maad@math.su.se}

\author{Amol Sasane}
\address{Department of Mathematics, Royal Institute of Technology,
    Stockholm, Sweden.}
\email{sasane@math.kth.se}

\begin{abstract}
  Let $C$ denote a closed convex cone $C$ in $\mR^d$ with apex at $0$.
  We denote by $\calE'(C)$ the set of distributions having compact
  support which is contained in $C$. Then $\calE'(C)$ is a ring with
  the usual addition and with convolution. We give a necessary and
  sufficient analytic condition on $\widehat{f}_1,\dots,
  \widehat{f}_n$ for $f_1,\dots ,f_n \in \calE'(C)$ to generate the
  ring $\calE'(C)$.  (Here $\widehat{\;\cdot\;}$ denotes
  Fourier-Laplace transformation.) This result is an application of a
  general result on rings of analytic functions of several variables
  by H\"ormander.  En route we answer an open question posed by Yutaka
  Yamamoto.
\end{abstract}

\maketitle

\section{Introduction}

Let $R$ be a commutative ring with identity. Elements $a_1, \dots,
a_n$ of $R$ are said to {\em generate} $R$ if the ideal generated by
$a_1,\dots, a_n$ is equal to $R$, or equivalently, if there exist
$b_1,\dots,b_n$ such that $a_1 b_1+ \dots+ a_n b_n=1$.

For instance, if $R=H^\infty(\mD)$, the set of all bounded and
holomorphic functions on the open unit disc $\mD$ centered at $0$ in
$\mC$, then the corona theorem says that $f_1, \dots, f_n \in
H^\infty(\mD)$ generate $H^\infty(\mD)$ iff there exists a $C>0$ such
that $ |f_1(z)|+\dots+ |f_n(z)|>C$ for all $z\in \mD$; see \cite{Car}.

In this note we address this question when the ring $R$ consists of
compactly supported distributions. 

Let $C$ denote a closed convex cone $C$ in $\mR^d$ with apex at $0$.
Recall that a {\em convex cone} is a subset of $\mR^d$ with the
following properties:
\begin{enumerate}
\item If $x, y\in C$, then $x+y \in C$. 
\item If $x\in C$ and $t>0$, then $tx\in C$.  
\end{enumerate}
Let $\calE'(C)$ be the set consisting of all distributions having a
compact support which is contained in $C$. Then $\calE'(C)$ is a
commutative ring with the usual addition of distributions and the
operation of convolution.  The Dirac delta distribution $\delta$
supported at $0$ serves as an identity in the ring $\calE'(C)$. 

Recall that a distribution $f$ with compact support has a finite
order, and its Fourier-Laplace transform is an entire function given
by
$$
\widehat{f}(z)=\langle f, e^{-iz\cdot} \rangle ,\quad z\in \mC^d.
$$
We use the notation $\|\cdot \|$ for the usual Euclidean $2$-norm in
$\mC^d$. The same notation is also used for the Euclidean norm in
$\mR^d$. 

The {\em supporting function} of a convex, compact set $K$ ($\subset
\mR^d$) is defined by
$$
H_K(\xi)= \sup_{x\in K} \langle x, \xi\rangle, \quad \xi \in \mR^d.
$$ 
Our main result is the following:

\begin{theorem}
\label{theorem_1} 
Let $C$ denote a closed convex cone in $\mR^d$ with apex at $0$,
and $H$ denote the supporting function of the compact convex set 
$$
B:=C\cap \{ x\in \mR^d: \|x\|\leq 1\},
$$ 
that is, $H(\xi)=\displaystyle \sup_{x\in B} \langle \xi, x\rangle$.

Let $f_1, \dots, f_n\in \calE'(C)$. There exist $g_1,\dots,
g_n\in\calE'(C)$ such that
$$
f_1 \ast g_1 + \dots + f_n \ast g_n =\delta
$$
iff there are positive constants $C,N , M$ such that 
\begin{equation}
\label{CC}
\textrm{for all }z\in\mC^d\textrm{, } 
|\widehat{f}_1(z)|+\dots+|\widehat{f}_n(z)|\geq 
C (1+ \|z\|^2)^{-N} e^{-M H(\textrm{\em Im}(z))}.
\end{equation} 
\end{theorem}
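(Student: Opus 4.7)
The plan is to translate the statement, via the Paley--Wiener--Schwartz theorem, into a generator problem for a specific algebra of entire functions on $\mC^d$, and then invoke a theorem of H\"ormander on generators for rings of analytic functions of several variables (the paper \emph{Generators for some rings of analytic functions}, Bull.\ AMS, 1967). Under $\widehat{\;\cdot\;}$, the ring $\calE'(C)$ is identified with the algebra $A_p$ of entire functions $F$ on $\mC^d$ for which there exist constants $C,N,M>0$ with
$$
|F(z)|\leq C(1+\|z\|^2)^N e^{MH(\mathrm{Im}(z))},\quad z\in\mC^d;
$$
the weight $MH(\mathrm{Im}(z))$ arises because the supporting function of any compact subset of $C$ contained in the closed Euclidean ball of radius $M$ is dominated by $MH$.

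The ``only if'' direction is then immediate: from $f_1\ast g_1+\dots+f_n\ast g_n=\delta$ one obtains $\widehat f_1\widehat g_1+\dots+\widehat f_n\widehat g_n=1$, and bounding each $\widehat g_i$ by Paley--Wiener--Schwartz yields the lower bound \eqref{CC}.

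For the ``if'' direction I will apply H\"ormander's theorem, which asserts that if $p$ is a plurisubharmonic weight on $\mC^d$ satisfying $\log(1+\|z\|^2)=O(p(z))$ and a local comparability property ($p(w)\leq C_1 p(z)+C_2$ whenever $\|w-z\|\leq 1$), and if $F_1,\dots,F_n\in A_p$ satisfy $|F_1(z)|+\dots+|F_n(z)|\geq \epsilon e^{-Ap(z)}$ for some $\epsilon,A>0$, then there exist $G_1,\dots,G_n\in A_p$ with $F_1G_1+\dots+F_nG_n=1$. I will apply this with $p(z)=\log(1+\|z\|^2)+H(\mathrm{Im}(z))$ and $F_i=\widehat f_i$; after absorbing constants, the corona condition \eqref{CC} is exactly the hypothesis $\sum|F_i(z)|\geq\epsilon e^{-Ap(z)}$ (take $A=\max(N,M)$). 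Pulling the resulting $G_i$ back through Paley--Wiener--Schwartz then produces the desired $g_i\in\calE'(C)$ with $f_1\ast g_1+\dots+f_n\ast g_n=\delta$.

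The main technical obstacle is verifying that the weight $p(z)=\log(1+\|z\|^2)+H(\mathrm{Im}(z))$ satisfies H\"ormander's hypotheses, most notably plurisubharmonicity. Convexity of $H$ as a supporting function makes $H(\mathrm{Im}(z))$ convex in the underlying real coordinates of $\mC^d$, but one must upgrade this to plurisubharmonicity in $z$; this relies on the fact that any convex function of $\mathrm{Im}(z)$ alone is plurisubharmonic, which I would justify by smoothing $H$ with a mollifier and verifying that the Levi form of the resulting smooth function is nonnegative, then passing to the limit. The local comparability and growth conditions on $p$ then follow routinely from subadditivity $H(\xi+\eta)\leq H(\xi)+H(\eta)$, positive homogeneity $H(t\xi)=tH(\xi)$, and the bound $0\leq H(\xi)\leq\|\xi\|$ that comes from $0\in B\subset\{\|x\|\leq 1\}$. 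Once these technical properties are in place, H\"ormander's theorem delivers the conclusion, and (as noted in the abstract) specializing the resulting statement in dimension one recovers an affirmative answer to Yamamoto's open question.
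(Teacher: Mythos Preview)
Your proposal is correct and follows essentially the same route as the paper: the same weight $p(z)=\log(1+\|z\|^2)+H(\mathrm{Im}(z))$, the same identification $A_p=\widehat{\calE'(C)}$ via Paley--Wiener--Schwartz, the same verification of H\"ormander's hypotheses (plurisubharmonicity from convexity of $H$, local comparability from subadditivity and the bound $H(\xi)\leq\|\xi\|$), and the same reduction of \eqref{CC} to H\"ormander's corona hypothesis with exponent $\max\{N,M\}$.

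One correction to your closing remark: the answer to Yamamoto's question is \emph{negative}, not affirmative. Yamamoto asked whether the absence of common zeros of $\widehat f_1,\widehat f_2$ already suffices for a B\'ezout identity in $\calE'(\mR)$; the theorem resolves this by identifying the genuinely stronger condition \eqref{CC} as the correct one, and the paper exhibits an explicit pair $f_1=\delta-\delta_c$, $f_2=\mathbf 1_{[0,1]}$ (with $c$ the Liouville constant) whose transforms have no common zeros yet fail \eqref{CC}.
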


Theorem~\ref{theorem_1}, in the case when $d=1$ and $C=\mR$ was known; see
\cite{Mei}.

\section{Proof of the main result}

We will show that our main result follows from the main result
given in \cite{Hor}. We will use the Payley-Wiener-Schwartz theorem,
which is recalled below.

\begin{proposition}
[Payley-Wiener-Schwartz]
Let $K$ be a convex compact subset of $\mR^d$ with supporting function
$H$. If $u$ is a distribution with support contained in $K$, then
there exists a positive $N$ such that
\begin{equation}
\label{PWS}
\textrm{for all }z\in\mC^d\textrm{, }
\widehat{u}(z) \leq C (1+\|z\|^2)^N e^{H(\textrm{\em Im}(z))}.
\end{equation}
Conversely, every entire analytic function in $\mC^d$ satisfying an
estimate of the form \eqref{PWS} is the Fourier-Laplace transform of a
distribution with support contained in $K$.
\end{proposition}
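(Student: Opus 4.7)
The plan is to prove the two directions of the Paley--Wiener--Schwartz theorem separately. For the forward direction I would exploit the fact that a compactly supported distribution has finite order together with a family of cutoff functions that shrink towards $K$; for the converse I would produce $u$ as a tempered distribution via inverse Fourier transform and use a contour shift in $\mC^d$ to convert the exponential bound in imaginary directions into a support statement for $u$.

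For the forward direction, a distribution $u\in\calE'(\mR^d)$ supported in $K$ has some finite order $N_0$, so $|\langle u,\phi\rangle|\leq C_0\sum_{|\alpha|\leq N_0}\sup_{x\in K_\epsilon}|D^\alpha\phi(x)|$ for every smooth $\phi$, where $K_\epsilon:=\{x\in\mR^d:\textrm{dist}(x,K)\leq\epsilon\}$. I would choose a cutoff $\chi_\epsilon\in C_c^\infty(\mR^d)$ with $\chi_\epsilon\equiv 1$ near $K$, $\textrm{supp}(\chi_\epsilon)\subset K_\epsilon$, and $|D^\alpha\chi_\epsilon|\leq C_\alpha\epsilon^{-|\alpha|}$, and write $\widehat u(z)=\langle u,\chi_\epsilon(\cdot)e^{-iz\cdot(\cdot)}\rangle$. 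The Leibniz rule, the pointwise bound $|e^{-iz\cdot x}|=e^{\textrm{Im}(z)\cdot x}\leq e^{H_{K_\epsilon}(\textrm{Im}(z))}$ on $\textrm{supp}(\chi_\epsilon)$, and the inequality $H_{K_\epsilon}(\xi)\leq H(\xi)+\epsilon\|\xi\|$, yield
\[
|\widehat u(z)|\,\leq\,C'\,(1+\|z\|)^{N_0}\,\epsilon^{-N_0}\,e^{H(\textrm{Im}(z))+\epsilon\|\textrm{Im}(z)\|}.
\]
Taking $\epsilon=(1+\|z\|)^{-1}$ absorbs the extra exponential into a constant and delivers \eqref{PWS} with some $N$ slightly larger than $N_0$.

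For the converse, given an entire $F$ satisfying \eqref{PWS}, the restriction $F|_{\mR^d}$ has polynomial growth and hence defines a tempered distribution; set $u:=\calF^{-1}(F|_{\mR^d})\in\calS'(\mR^d)$. To prove $\textrm{supp}(u)\subset K$, fix $x_0\notin K$. Since $K$ is compact convex, Hahn--Banach gives $\eta\in\mR^d$ and $\delta>0$ with $\langle x_0,\eta\rangle>H(\eta)+\delta$, and one may then choose $\phi\in C_c^\infty(\mR^d)$ supported so close to $x_0$ that $\langle y,\eta\rangle\geq H(\eta)+\delta/2$ for all $y\in\textrm{supp}(\phi)$. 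Writing $\langle u,\phi\rangle=(2\pi)^{-d}\int_{\mR^d}F(\xi)\check\phi(\xi)\,d\xi$, both factors are entire in $\xi$ and $\check\phi$ decays faster than any polynomial in the real directions (the standard Paley--Wiener estimate for $C_c^\infty$ functions). Cauchy's theorem, applied coordinate by coordinate, lets me translate to the contour $\xi\mapsto\xi+it\eta$ for any $t>0$; on this shifted contour $F$ grows at most like $(1+\|\xi\|^2+t^2)^N e^{tH(\eta)}$, while $\check\phi$ retains Schwartz decay in $\xi$ and picks up a factor $e^{-t(H(\eta)+\delta/2)}$. Sending $t\to\infty$ forces $\langle u,\phi\rangle=0$, so $x_0\notin\textrm{supp}(u)$.

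The main obstacle is justifying the multidimensional contour shift in the converse: one must iterate Cauchy's theorem along each complex coordinate and verify that the resulting boundary contributions at infinity vanish despite the polynomial factor $(1+\|z\|^2)^N$. This is standard but delicate bookkeeping; it works because the Paley--Wiener decay of $\check\phi$ along horizontal strips beats any polynomial, so the shifted integral converges absolutely for each fixed $t$, after which the $e^{-t\delta/2}$ margin delivers the conclusion by letting $t\to\infty$.
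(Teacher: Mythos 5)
The paper does not actually prove this proposition: it is stated with a one-line pointer to H\"ormander's textbook \cite[Theorem~7.3.1]{Hor83}, plus a short remark that the factor $(1+\|z\|^2)^N$ used here is interchangeable with H\"ormander's $(1+\|z\|)^N$ because $1+\|z\|^2\leq(1+\|z\|)^2\leq 2(1+\|z\|^2)$. What you have written is a correct reconstruction of that textbook proof. Your forward direction (finite order, cutoffs $\chi_\epsilon$ shrinking to $K$ with $|D^\alpha\chi_\epsilon|\lesssim\epsilon^{-|\alpha|}$, Leibniz, $H_{K_\epsilon}\leq H+\epsilon\|\cdot\|$, and the choice $\epsilon\sim(1+\|z\|)^{-1}$) is exactly the classical argument and the bookkeeping checks out; note only that the order estimate for $u$ uses a fixed neighbourhood $K_{\epsilon_0}$, so one should take $\epsilon=\min\{\epsilon_0,(1+\|z\|)^{-1}\}$, which affects only the constant. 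Your converse (define $u=\calF^{-1}(F|_{\mR^d})\in\calS'$, separate $x_0\notin K$ from $K$ by a hyperplane, shift the contour in the direction $\eta$, and let $t\to\infty$ to kill $\langle u,\phi\rangle$ via the $e^{-t\delta/2}$ margin) is likewise the standard argument, and your discussion of why the iterated one-variable Cauchy shift is legitimate is the right point to be careful about. One small thing worth adding at the end: once $\mathrm{supp}(u)\subset K$ is established, $\widehat{u}$ is entire and coincides with $F$ on $\mR^d$, hence equals $F$ on all of $\mC^d$ by analytic continuation, which is what ``$F$ is the Fourier--Laplace transform of $u$'' requires. In short, you have supplied the proof that the paper chose to outsource; the paper's approach buys brevity by citing a standard reference, while yours makes the note self-contained.
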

\begin{proof} See for instance \cite[Theorem~7.3.1]{Hor83}. The only
  difference is that we have the term $(1+\|z\|^2)^N$ instead of
  $(1+\|z\|)^N$ in the estimate \eqref{PWS}, which follows from the
  observation that $1+\|z\|^2 \leq (1+\|z\|)^2 \leq 2(1+\|z\|^2)$ for
  every $z\in \mC^d$ (and by replacing $N/2$ by $N$).
\end{proof}

We also recall the main result from H\"ormander \cite[Theorem 1, p.
943]{Hor}, which we will use.

Let $p$ be a nonnegative function defined in $\mC^d$. Let $A_p$ denote
the set of all entire functions $F:\mC^d\rightarrow \mC$ such that
there exist positive constants $C_1$ and $C_2$ (which in general
depend on $F$) such that
$$
\textrm{for all }z\in\mC^d\textrm{, } |F(z)| \leq C_1 e^{C_2 p(z)}.
$$
It is clear that $A_p$ is a ring with the usual pointwise operations.

\begin{proposition}[H\"ormander]
  \label{prop_Hor}Let $p$ be a nonnegative plurisubharmonic function
  in $\mC^d$ such that
\begin{enumerate}
\item all polynomials belong to $A_p$
\item there exist nonnegative $K_1, K_2, K_3, K_4$ such that
  whenever $z,\zeta \in \mC^d$ satisfy $\|z-\zeta\|\leq e^{-K_1
    p(z)-K_2}$, there holds that $p(\zeta)\leq K_3 p(z)+K_4$.
\end{enumerate}
If there exist positive constants $C_1, C_2 $ such that 
\begin{equation}
\label{CCHor}
\textrm{for all }z\in\mC^d\textrm{, }
|F_1(z)|+\dots+ |F_n(z)|\geq C_1 e^{-C_2 p(z)},
\end{equation}
then  $F_1,\dots, F_n\in A_p$ generate $A_p$.
\end{proposition}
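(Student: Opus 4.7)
The plan is to deduce the proposition from H\"ormander's $L^2$-existence theorem for the $\bar\partial$-operator with plurisubharmonic weights. First I would construct a smooth (non-holomorphic) B\'ezout identity by setting $\psi_i := \overline{F_i}/\sum_j |F_j|^2$, so that $\sum_i F_i\psi_i \equiv 1$ and, by \eqref{CCHor}, $|\psi_i(z)|\le C_1^{-1}e^{C_2 p(z)}$; combined with Cauchy estimates on the $F_i$ (which, thanks to hypothesis (2), keep $A_p$ closed under differentiation) this yields matching pointwise bounds on $\bar\partial\psi_i$. I would then look for holomorphic $G_i$ of the form $G_i = \psi_i - \sum_j u_{ij}F_j$ with $u_{ij}=-u_{ji}$; since the correction term has vanishing contraction with $(F_1,\dots,F_n)$, the identity $\sum_i F_i G_i = 1$ is preserved automatically, and the problem reduces to solving $\bar\partial u_{ij} = \psi_i\,\bar\partial\psi_j - \psi_j\,\bar\partial\psi_i$ for a smooth antisymmetric $u_{ij}$. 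For $n>2$ the right-hand side is not $\bar\partial$-closed, so I descend through the Koszul complex $\Lambda^{\bullet}\mC^n$ tensored with $\bar\partial$-forms, solving a $\bar\partial$-equation at each level from degree $n-1$ down to $1$.

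The next step is to choose an appropriate plurisubharmonic weight. H\"ormander's $L^2$-theorem gives, for any plurisubharmonic $\Phi$ and any $\bar\partial$-closed $(0,q)$-form $v$, a solution $u$ of $\bar\partial u = v$ with
\[
\int_{\mC^d}|u|^2(1+\|z\|^2)^{-2}e^{-\Phi}\,dV\;\le\;\int_{\mC^d}|v|^2 e^{-\Phi}\,dV.
\]
I would take $\Phi(z)=A\,p(z)+B\log(1+\|z\|^2)$ for sufficiently large $A,B$; this is plurisubharmonic because $p$ is, and so is $\log(1+\|z\|^2)$. The constant $A$ has to be large enough that the Koszul right-hand sides --- polynomial combinations of $\psi_i$, their $\bar\partial$-derivatives, and the $F_i$, each bounded pointwise by a fixed power of $e^{p}$ --- lie in $L^2(e^{-\Phi})$, and $B$ large enough that $(1+\|z\|^2)^{-B}$ is integrable on $\mC^d$. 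Hypothesis (1), applied to monomials $z_j^N$, forces $p(z) \ge c\log\|z\|-c'$ for some $c>0$, which is precisely what lets polynomial factors produced along the way be absorbed into $e^{\mathrm{const}\cdot p}$.

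Running the induction through the Koszul complex produces antisymmetric $u_{ij}$, and hence holomorphic $G_i$, satisfying weighted $L^2$-bounds against $e^{-\Phi}$. The main obstacle, and the place where hypothesis (2) enters decisively, is the passage from these $L^2$-bounds to the pointwise bounds needed to conclude $G_i\in A_p$. For any holomorphic $G$ on $\mC^d$ the sub-mean-value inequality gives $|G(z)|^2\le C_d\,r^{-2d}\int_{\|w-z\|\le r}|G(w)|^2\,dV(w)$; choosing $r = \exp(-K_1 p(z)-K_2)$, hypothesis (2) guarantees $p(w)\le K_3 p(z)+K_4$ throughout the ball, so $\Phi(w)\le K_3'\Phi(z)+K_4'$ there and the integral is at most $e^{K_3'\Phi(z)+K_4'}\|G\|_\Phi^2$. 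Solving for $|G(z)|$ and invoking hypothesis (1) to swallow the $(1+\|z\|^2)^{B/2}$ factor yields $|G(z)|\le C\,e^{C'p(z)}$, i.e. $G_i\in A_p$. I expect the careful bookkeeping of constants through the Koszul descent, together with this extraction of pointwise bounds from the weighted $L^2$-theory, to be the chief technical difficulty.
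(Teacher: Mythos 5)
The paper itself gives no proof of this proposition: it is quoted directly as Theorem~1 of H\"ormander's 1967 \emph{Bulletin AMS} paper \cite{Hor}, and your sketch is essentially a faithful outline of H\"ormander's own argument --- the smooth B\'ezout solution $\psi_i=\overline{F_i}/\sum_j|F_j|^2$, the antisymmetric correction $u_{ij}$ (with the Koszul-complex descent for $n\ge 3$), weighted $L^2$-$\bar\partial$ solvability with a weight of the form $A\,p+B\log(1+\|z\|^2)$, and the sub-mean-value inequality on balls of radius $e^{-K_1p(z)-K_2}$ combined with hypothesis~(2) to convert $L^2$ bounds into pointwise membership in $A_p$. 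Two small inaccuracies worth flagging, neither of which harms the argument: the bound on $\psi_i$ is not simply $C_1^{-1}e^{C_2p}$ from \eqref{CCHor} alone --- since $\psi_i=\overline{F_i}/\sum_j|F_j|^2$ one also needs the upper bound $|F_i|\le C e^{Cp}$ coming from $F_i\in A_p$, together with $\sum_j|F_j|^2\ge n^{-2}C_1^2e^{-2C_2p}$; and for $n=2$ the form $\psi_i\bar\partial\psi_j-\psi_j\bar\partial\psi_i$ is in fact already $\bar\partial$-closed (a consequence of $\sum_jF_j\bar\partial\psi_j=0$ and the absence of common zeros), so the Koszul descent is only genuinely needed once $n\ge 3$, as you indicate.
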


\begin{lemma}
\label{lemma_tech_1}
Let $C$ denote a closed convex cone in $\mR^d$ with apex at $0$, and
$H$ denote the supporting function of the compact convex set 
$$
B:=C\cap \{ x\in \mR^d: \|x\|\leq 1\},
$$ 
that is, $H(\xi)= \displaystyle \sup_{x\in B} \langle \xi, x\rangle$.

Let $p(z):= \log (1+ \|z\|^2) + H(\textrm{\em Im}(z))$. 
Then we have the following:
\begin{enumerate}
\item $p$ is nonnegative and subharmonic.$\phantom{\widehat{\calE'(C)}}$
\item $A_p= \widehat{\calE'(C)}$.
\item $A_p$ contains the polynomials.$\phantom{\widehat{\calE'(C)}}$
\item $\phantom{\widehat{\calE'(C)}}\!\!\!\!\!\!\!\!\!\!\!\!\!\!\!\!$ There exist nonnegative $K_1, K_2, K_3, K_4$ such that whenever
  $z,\zeta \in \mC^d$ satisfy $\|z-\zeta\|\leq e^{-K_1 p(z)-K_2}$, there
  holds that $p(\zeta)\leq K_3 p(z)+K_4$. (That is, condition (2) of
  Proposition~\ref{prop_Hor} is satisfied.)
\end{enumerate}
\end{lemma}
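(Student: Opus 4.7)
Items (1), (3) should be essentially immediate, (2) is a reformulation of Paley-Wiener-Schwartz, and (4) is a short Lipschitz estimate on $H$ combined with a standard estimate on $\log(1+\|z\|^2)$. Because $0 \in B$ (the cone has apex at $0$), I have $H(\xi) \geq \langle \xi, 0\rangle = 0$, so together with $\log(1+\|z\|^2) \geq 0$ I obtain $p \geq 0$. For plurisubharmonicity (which is what Proposition~\ref{prop_Hor} actually needs, and which implies subharmonicity), I will use that $\log(1+\|z\|^2)$ is classically plurisubharmonic, while for each fixed $x \in B \subset \mR^d$ the function $z \mapsto \langle x, \textrm{Im}(z)\rangle$ is pluriharmonic; since $B$ is compact, the pointwise supremum $H(\textrm{Im}(z))$ is continuous and hence plurisubharmonic.

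Item (2) will follow directly from Paley-Wiener-Schwartz. In one direction, if $f \in \calE'(C)$ has compact support in $C$, then $\textrm{supp}(f) \subset C \cap \{\|x\| \leq M\} = MB$ for $M$ large enough; this compact convex set has supporting function $MH$, so PWS yields $|\widehat{f}(z)| \leq C(1+\|z\|^2)^N e^{M H(\textrm{Im}(z))} \leq C e^{\max(N,M)\,p(z)}$. In the other direction, any $F \in A_p$ satisfies $|F(z)| \leq C_1(1+\|z\|^2)^{C_2} e^{C_2 H(\textrm{Im}(z))}$, which by the converse of PWS exhibits $F$ as the Fourier-Laplace transform of a distribution supported in $C_2 B \subset C$. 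Item (3) is then immediate, since any polynomial $P$ is bounded by some $C(1+\|z\|^2)^N \leq C e^{N p(z)}$.

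For (4), I will simply take $K_1 = K_2 = 0$, so that the hypothesis reduces to $\|z - \zeta\| \leq 1$. The key observation, and the reason for the specific choice $B = C \cap \{\|x\| \leq 1\}$, is that every $x \in B$ satisfies $\|x\| \leq 1$, which makes $H$ one-Lipschitz: by sublinearity and Cauchy-Schwarz, $H(\xi) - H(\eta) \leq H(\xi - \eta) \leq \|\xi - \eta\|$. Applying this with $\xi = \textrm{Im}(\zeta)$ and $\eta = \textrm{Im}(z)$ gives $H(\textrm{Im}(\zeta)) \leq H(\textrm{Im}(z)) + 1$. For the log-term, $\|\zeta\|^2 \leq (\|z\|+1)^2 \leq 2\|z\|^2 + 2$ gives $\log(1+\|\zeta\|^2) \leq \log 3 + \log(1+\|z\|^2)$; adding the two yields $p(\zeta) \leq p(z) + (\log 3 + 1)$, so $K_3 = 1$ and $K_4 = \log 3 + 1$ work. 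The only potential pitfall is to notice that the seemingly $z$-dependent right-hand side $e^{-K_1 p(z) - K_2}$ in condition (2) of Proposition~\ref{prop_Hor} need not actually shrink with $z$ in this setting: a uniform bound suffices, precisely because $H$ is globally Lipschitz on all of $\mR^d$.
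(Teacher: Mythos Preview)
Your proposal is correct and follows essentially the same route as the paper. The minor differences are all streamlinings on your side: for (1) you take a supremum of pluriharmonic functions over the compact set $B$ rather than invoking the ``convex in $\textrm{Im}(z)$ $\Rightarrow$ plurisubharmonic'' criterion; for (2) you observe directly that a compact support in $C$ lies in $MB$ (using that $C$ is a cone, so $MB = C\cap\{\|x\|\le M\}$) instead of first bounding $H_K$ by $\epsilon^{-1}H$; for (3) you bound polynomials directly rather than realizing them as transforms of derivatives of $\delta$; and for (4) your choice $K_1=K_2=0$ and constants $K_3=1$, $K_4=\log 3+1$ match the paper's $K_3=1$, $K_4=\log 8+1$ up to the arithmetic in the log-term estimate.
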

\begin{proof} (1) Clearly $p$ is nonnegative. Also, the complex
  Hessian at $z$ of the map $z\mapsto \log (1+\|z\|^2)$ is easily seen
  to be
$$
F(z):=\frac{1}{1+\|z\|^2} I-\frac{1}{(1+\|z\|^2)^2}zz^*.
$$
So for $w\in \mC^d$, we have that 
\begin{eqnarray*}
w^*F(z) w&=&\frac{1}{1+\|z\|^2} \|w\|^2-\frac{1}{(1+\|z\|^2)^2}|w^*z|^2\\
&=&\frac{\|w\|^2+\|w\|^2\|z\|^2-|w^*z|^2}{(1+\|z\|^2)^2}\geq 0
\end{eqnarray*}
by the Cauchy-Schwarz inequality. So the map $z\mapsto \log (1+|z|^2)$
is plurisubharmonic; see \cite[Proposition~4.9, p.88]{Ran}.

We will use the fact that a map $\varphi:\mC^d\rightarrow \mR$ that
depends only on the imaginary part of the variable is plurisubharmonic
iff the map is convex; see \cite[E.4.8, p.92]{Ran}. The supporting
function $H_K$ of any convex compact set $K$ satisfies the properties
that
$$
H_K(\xi+ \eta)\leq H_K(\xi)+ H_K(\eta), \quad H_K(t\xi)=t H_K(\xi)
$$ 
for all $\xi,\eta\in \mR^d$ and $t\geq 0$. It is then clear that $H_K$
is a convex function. In particular our $H$ (the supporting function
of $B$) is convex too. Thus $z\mapsto H(\textrm{Im}(z))$ is
plurisubharmonic.  Consequently, $p$, which is the sum of the
plurisubharmonic maps $ z\mapsto \log (1+\|z\|^2)$ and $z\mapsto
H(\textrm{Im}(z))$, is plurisubharmonic as well; \cite[p.88]{Ran}.

\medskip

\noindent (2) Suppose that $f\in \calE'(C)$ has support contained in
the compact set $K$ contained in $C$. Then by the
Payley-Wiener-Schwartz Theorem, there exist positive $C$, $N$, $M$
such that
$$
|\widehat{f}(z)|\leq C(1+\|z\|^2)^N e^{H_K(\textrm{Im}(z))}.
$$
Let $\epsilon>0$ be such that $\epsilon K \subset B$. Then we have for
$\xi \in \mR^d$ that
$$
H_K(\xi)= \sup_{x\in K} \langle x,\xi\rangle\leq  \sup_{x\in \epsilon^{-1}B} \langle x,\xi\rangle
= \epsilon^{-1}\sup_{y\in B} \langle y,\xi\rangle=\epsilon^{-1}H(\xi).
$$
Thus with $M:=   \epsilon^{-1}$, we have 
\begin{eqnarray*}
|\widehat{f}(z)|&\leq& C(1+\|z\|^2)^{N}
e^{MH(\textrm{Im}(z))}=Ce^{N\log(1+\|z\|^2)+M
  H(\textrm{Im}(z))}\\
&\leq& C e^{\max\{N,M\}p(z)}.
\end{eqnarray*}
So $\widehat{f}\in A_p$. 

Conversely, if $F\in A_p$, then
$$
|F(z)|\leq C_1 e^{C_2 p(z)}=C_1 (1+\|z\|^2)^{C_2}e^{C_2 H(\textrm{Im}(z))}.
$$
But for $\xi \in \mR^d$ we have 
$$
C_2H(\xi)=  C_2 \sup_{x\in B} \langle x,\xi\rangle
=
\sup_{y\in C_2 B} \langle y,\xi\rangle=H_{C_2 B} (\xi).
$$
So by the Payley-Wiener-Schwartz theorem, there exists an $f\in
\calE'(\mR^d)$ such that $\widehat{f}=F$ and the support of $f$ is
contained in $C_2B\subset C$. Thus $F\in \widehat{\calE'(C)}$.

\medskip

\noindent (3) Let $\mZ_{+}=\{0,1,2,3,\dots\}$. Let 
$$
Q(z)=\sum_{k\in \mZ_{+}^n,\;|k| \leq N} a_k
z^k,
$$
where for a multi-index $k=(k_1,\dots, k_n)\in \mZ_{+}^n$, 
$$
| k|:= k_1+\dots +k_n,\quad  z^k= z_1^{k_1}\dots z_n^{k_n},\quad 
\textrm{and}\quad a_k\in \mC.
$$
Consider
$$
q=\sum_{k\in \mZ_{+}^n,\;|k| \leq N} a_k\frac{1}{i^{|k|}} \frac{\partial^{|k|}}{\partial
  x_1^{k_1}\dots \partial x_n^{k_n}} \delta\in \calE'(C),
$$
Then $Q=\widehat{q}\in \widehat{\calE'(C)}=A_p$, and so $Q\in A_p$. 

 \medskip

 \noindent (4) Let $K_1$ and $K_2$ be nonnegative, and let $z,\zeta $
 satisfy $\|z-\zeta\|\leq e^{-K_1 p(z)-K_2}$. Then 
$$
\|z-\zeta\|\leq e^{-K_1 p(z)-K_2}=e^{-K_1 p(z)}e^{-K_2}
\leq 1\cdot 1=1.
$$
In particular, $\|\zeta\|\leq \|z\|+1$. Also, 
\begin{eqnarray*}
H(\textrm{Im}(\zeta-z))
&=&
\sup_{x\in B}\langle x, \textrm{Im}(\zeta-z)\rangle 
\leq \sup_{x\in B}\|x\| \| \textrm{Im}(\zeta-z)\|
\\
&\leq& \sup_{x\in B}\|x\|\|\zeta-z\|
\leq 1\cdot 1=1.
\end{eqnarray*}
Thus
\begin{eqnarray*}
p(\zeta)&=&\log (1+\|\zeta\|^2) +H(\textrm{Im}(\zeta))
\leq 2\log(1+\|\zeta\|)+H(\textrm{Im}(z+\zeta-z))
\\
&\leq& 2\log(2+\|z\|)+H(\textrm{Im}(z)) + H(\textrm{Im}(\zeta-z))
\\
&\leq&  \log (8(1+|z|^2))+H(\textrm{Im}(z))+1=p(z)+\log 8+1.
\end{eqnarray*}
This completes the proof. 
\end{proof}

\begin{proof}[Proof of Theorem~\ref{theorem_1}] Necessity of the condition
  \eqref{CC} is not hard to check. Indeed, if there are $g_1, \dots,
  g_n\in \calE'(C)$ such that
$$
f_1 \ast g_1 + \dots + f_n \ast g_n =\delta,
$$
then upon taking Fourier-Laplace transforms, we obtain 
$$
\widehat{f}_1(z)  \widehat{g}_1(z) + \dots + \widehat{f}_n(z) \widehat{g}_n(z) =1 \quad (z\in \mC^d).
$$
By the triangle inequality, 
$$
1= |\widehat{f}_1(z)  \widehat{g}_1(z) + \dots + \widehat{f}_n(z) \widehat{g}_n(z)|
\leq 
|\widehat{f}_1(z)||  \widehat{g}_1(z)| + \dots + |\widehat{f}_n(z) ||\widehat{g}_n(z)|.
$$
Suppose that $g_k$ has support contained in the compact convex set
$L_k$ ($\subset C$), where $k=1,\dots, n$. Then by the
Payley-Wiener-Schwartz theorem, we have an estimate
$$
|\widehat{g}_k(z)|\leq C_k (1+\|z\|^2)^{N_k} e^{H_{L_k}  (\textrm{Im}(z))}
$$ 
for each $k$. Let $\epsilon>0$ be small enough so that $\epsilon L_k
\subset B$ for all the $k$. Then we have for $\xi \in \mR^d$ that
$$
H_{L_k}(\xi)= \sup_{x\in L_k} \langle x,\xi\rangle\leq  \sup_{x\in \epsilon^{-1}B} \langle x,\xi\rangle
= \epsilon^{-1}\sup_{y\in B} \langle y,\xi\rangle=\epsilon^{-1}H(\xi).
$$
Thus we have that for all $k$,
$$
|\widehat{g}_k(z)|\leq C (1+\|z\|^2)^{N} e^{MH  (\textrm{Im}(z))},
$$
where $M:=\epsilon^{-1}$,  $C:=\displaystyle \max_k C_k$ and  $N:=\displaystyle \max_k N_k$. 
Consequently, 
$$
1\leq (|\widehat{f}_1(z)|+ \dots + |\widehat{f}_n(z)|) C
(1+\|z\|^2)^{N} e^{M H  (\textrm{Im}(z)) },
$$
and this yields \eqref{CC}, completing the proof of the necessity
part.

We now show the sufficiency of \eqref{CC}. Let $f_1,\dots, f_n\in
\calE'(C)$ be such that their Fourier-Laplace transforms satisfy
\eqref{CC}. Then by Lemma~\ref{lemma_tech_1}, $\widehat{f}_1,\dots,
\widehat{f}_n\in A_p$ with $p(z)=\log (1+\|z\|^2)+H(\textrm{Im}(z))$
($z\in \mC^d$). Moreover, this $p$ satisfies the conditions (1) and (2) of
Proposition~\ref{prop_Hor}.  The condition \eqref{CC} gives
\begin{eqnarray*}
|\widehat{f}_1(z)|+\dots+|\widehat{f}_n(z)|&\geq& 
C (1+ \|z\|^2)^{-N} e^{-MH(\textrm{Im}(z) )}
\\
&\geq& 
Ce^{-N\log(1+\|z\|^2) -MH(\textrm{Im}(z) )}
\\
&\geq& 
Ce^{-\max\{N,M\}p(z)}.
\end{eqnarray*}
It then follows from Proposition~\ref{prop_Hor} that there are some
$G_1, \dots, G_n$ in $A_p$ such that $ \widehat{f}_1 G_1+\dots +
\widehat{f}_n G_n =1$ on $\mC$. But $A_p=\widehat{\calE'(C)}$.  Hence
there exist $g_1,\dots, g_n\in \calE'(C)$ such that $f_1\ast
g_1+\dots+f_n \ast g_n=\delta$.
\end{proof}

\section{Special cases of the main result}

\subsection{The full space $\mR^d$} 

The supporting function $H$ of the unit ball $B$ in $\mR^d$ is given
by $H(\xi)=\|\xi\| $. So we obtain the following consequence of
Theorem~\ref{theorem_1}.

\begin{corollary}
Let $f_1, \dots, f_n\in \calE'(\mR^d)$. There exist $g_1,\dots,
g_n\in\calE'(\mR^d)$ such that
$$
f_1 \ast g_1 + \dots + f_n \ast g_n =\delta
$$
iff there are positive constants $C,N , M$ such that 
\begin{equation}
\label{CCC}
\textrm{for all }z\in\mC^d\textrm{, } 
|\widehat{f}_1(z)|+\dots+|\widehat{f}_n(z)|\geq 
C (1+ \|z\|^2)^{-N} e^{-M \|\textrm{\em Im}(z) \|}.
\end{equation} 
\end{corollary}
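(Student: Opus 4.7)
The plan is to deduce this corollary directly from Theorem~\ref{theorem_1} by specializing to the case $C = \mR^d$ and computing the supporting function explicitly. First I would observe that when $C = \mR^d$, the set $B = C \cap \{x \in \mR^d : \|x\| \leq 1\}$ simply equals the closed Euclidean unit ball in $\mR^d$, so the hypotheses of Theorem~\ref{theorem_1} apply with this concrete choice of $B$.

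The main (and only substantive) step is to identify the supporting function $H$ of this unit ball. By definition, $H(\xi) = \sup_{\|x\| \leq 1} \langle x, \xi \rangle$. The Cauchy-Schwarz inequality gives $\langle x, \xi \rangle \leq \|x\| \|\xi\| \leq \|\xi\|$ for $x \in B$, while the choice $x = \xi/\|\xi\|$ (for $\xi \neq 0$) achieves the value $\|\xi\|$; the case $\xi = 0$ is trivial. Hence $H(\xi) = \|\xi\|$ for all $\xi \in \mR^d$.

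Substituting $H(\textrm{Im}(z)) = \|\textrm{Im}(z)\|$ into the estimate \eqref{CC} of Theorem~\ref{theorem_1} produces exactly the condition \eqref{CCC}, and the biconditional statement of the corollary then follows immediately from the biconditional statement of Theorem~\ref{theorem_1}. I do not expect any obstacle here: the entire argument is a one-line verification that the supporting function of the unit ball is the Euclidean norm, followed by invocation of the main theorem.
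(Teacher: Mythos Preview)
Your proposal is correct and matches the paper's own argument exactly: the paper simply observes that for $C=\mR^d$ the set $B$ is the closed unit ball with supporting function $H(\xi)=\|\xi\|$, and then states the corollary as an immediate consequence of Theorem~\ref{theorem_1}. Your added justification via Cauchy--Schwarz for $H(\xi)=\|\xi\|$ is more detailed than what the paper writes but is of course the standard verification.
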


\subsection{The nonnegative orthant in $\mR^d$}

Let 
$$
\mR_+^d=\{x=(x_1,\dots, x_d)\in \mR^d: x_k\geq 0, \textrm{ for
  all }k=1,\dots, d\}.
$$  
The supporting function $H$ of $
B=\{x\in \mR_+^d:\|x\|\leq 1\}$ in $\mR^d$ is given by 
$$
H(\xi)=\|\xi^+\| ,
$$ 
where 
$$ 
\xi^+:=(\max\{\xi_1,0),\dots, \max\{\xi_d,0\}) 
$$
for $\xi=(\xi_1,\dots, \xi_d)\in \mR^d$.  Theorem~\ref{theorem_1} gives the following.

\begin{corollary}
Let $f_1, \dots, f_n\in \calE'(\mR_+^d)$. There exist $g_1,\dots,
g_n\in\calE'(\mR_+^d)$ such that
$$
f_1 \ast g_1 + \dots + f_n \ast g_n =\delta
$$
iff there are positive constants $C,N , M$ such that 
\begin{equation}
\label{CCCC}
\textrm{for all }z\in\mC^d\textrm{, } 
|\widehat{f}_1(z)|+\dots+|\widehat{f}_n(z)|\geq 
C (1+ \|z\|^2)^{-N} e^{-M \|(\textrm{\em Im}(z))^+ \|}.
\end{equation} 
\end{corollary}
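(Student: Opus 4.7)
The plan is to derive this corollary directly from Theorem~\ref{theorem_1} by specializing to the cone $C=\mR_+^d$. The only substantive step is to identify the supporting function $H$ of $B=\mR_+^d\cap\{x\in\mR^d:\|x\|\leq 1\}$ with the map $\xi\mapsto\|\xi^+\|$; once this identification is made, the estimate \eqref{CCCC} is just \eqref{CC} with the explicit form of $H$ substituted.

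First I would observe that $\mR_+^d$ is plainly a closed convex cone in $\mR^d$ with apex at $0$, since it is closed under vector addition and under multiplication by positive scalars, so Theorem~\ref{theorem_1} applies.

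Next I would compute $H(\xi)=\sup_{x\in B}\langle x,\xi\rangle$ for $\xi\in\mR^d$. Writing $x=(x_1,\dots,x_d)$ with $x_i\geq 0$ and $\sum x_i^2\leq 1$, one has $\langle x,\xi\rangle=\sum_{i=1}^d x_i\xi_i\leq \sum_{i=1}^d x_i\xi_i^+$ since $x_i\geq 0$, and by the Cauchy--Schwarz inequality this is bounded by $\|x\|\cdot\|\xi^+\|\leq\|\xi^+\|$. The upper bound is attained by taking $x=\xi^+/\|\xi^+\|$ when $\xi^+\neq 0$ (and trivially $H(\xi)=0=\|\xi^+\|$ otherwise), so $H(\xi)=\|\xi^+\|$ as claimed.

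Finally, applying Theorem~\ref{theorem_1} with this concrete $H$ yields that $f_1,\dots,f_n\in\calE'(\mR_+^d)$ generate $\calE'(\mR_+^d)$ (in the sense of the existence of $g_1,\dots,g_n$ with $f_1\ast g_1+\dots+f_n\ast g_n=\delta$) if and only if \eqref{CCCC} holds. There is no genuine obstacle; the only thing to be careful about is the elementary optimization argument identifying $H$ with $\|\xi^+\|$.
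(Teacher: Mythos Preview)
Your proposal is correct and follows exactly the paper's approach: the paper simply asserts that the supporting function of $B=\{x\in\mR_+^d:\|x\|\le 1\}$ is $H(\xi)=\|\xi^+\|$ and then invokes Theorem~\ref{theorem_1}. Your argument supplies the (elementary) verification of this formula that the paper omits, so there is nothing to add.
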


In particular, in the case when $d=1$, we obtain:

\begin{corollary}
Let $f_1, \dots, f_n\in \calE'(\mR_+)$. There exist $g_1,\dots,
g_n\in\calE'(\mR_+)$ such that
$$
f_1 \ast g_1 + \dots + f_n \ast g_n =\delta
$$
iff there are positive constants $C,N , M$ such that 
\begin{equation}
\label{CCCC'}
\textrm{for all }z\in\mC\textrm{, } 
|\widehat{f}_1(z)|+\dots+|\widehat{f}_n(z)|\geq 
C (1+ |z|^2)^{-N} e^{-M \max\{\textrm{\em Im}(z),0\}}.
\end{equation} 
\end{corollary}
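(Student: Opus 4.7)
The plan is to obtain this corollary as the one-dimensional specialization of the preceding corollary for the nonnegative orthant $\mR_+^d$. In fact, no new analytic work is needed: the only thing to verify is that when $d=1$ the expression $\|(\textrm{Im}(z))^+\|$ appearing in \eqref{CCCC} collapses to $\max\{\textrm{Im}(z),0\}$.

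First I would observe that for $d=1$ we have $\mR_+^1 = \mR_+$, so $\calE'(\mR_+^d) = \calE'(\mR_+)$ and the corresponding set $B = \mR_+ \cap \{x \in \mR : |x| \leq 1\} = [0,1]$. For any $\xi \in \mR$ the supporting function is then $H(\xi) = \sup_{x \in [0,1]} x\xi = \max\{\xi,0\}$, which agrees with the formula $\|\xi^+\|$ from the previous subsection specialized to a single coordinate. Equivalently, for $\xi \in \mR$ one has $\xi^+ = \max\{\xi,0\} \in \mR_+$, and the Euclidean norm of a scalar is just its absolute value, which for a nonnegative number is the number itself.

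Substituting this into the condition \eqref{CCCC} of the preceding corollary with $d=1$ yields
\[
|\widehat{f}_1(z)|+\dots+|\widehat{f}_n(z)|\geq C (1+ |z|^2)^{-N} e^{-M \max\{\textrm{Im}(z),0\}},
\]
which is exactly \eqref{CCCC'}. The equivalence with the existence of $g_1,\dots,g_n\in\calE'(\mR_+)$ solving the Bezout identity $f_1\ast g_1+\dots+f_n\ast g_n = \delta$ is then immediate from the $d$-dimensional orthant corollary.

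There is no real obstacle here; the entire content of the statement is already contained in Theorem~\ref{theorem_1} (via the preceding corollary), and the proof is a one-line unwinding of notation in dimension one. The only minor point worth writing out is the identification of the supporting function of $[0,1]$ as $\xi \mapsto \max\{\xi,0\}$, which is trivial.
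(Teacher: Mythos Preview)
Your proposal is correct and matches the paper's approach exactly: the paper presents this corollary as an immediate specialization of the $\mR_+^d$ corollary to $d=1$, without giving a separate proof. Your observation that $\|(\textrm{Im}(z))^+\|$ reduces to $\max\{\textrm{Im}(z),0\}$ in one variable is precisely the (trivial) verification needed.
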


\subsection{The future light cone in $\mR^{d+1}$}

Let $C$ be the future light cone, namely,
$$
\Gamma:=\{(x,t)\in \mR^{d}\times \mR: \|x\|\leq c t, \; t \geq 0\},
$$
where $c$ denotes the speed of light.  Then the supporting function of
the intersection of $\Gamma$ and the unit ball in $\mathbb R^{d+1}$ is
given by
$$
\Phi(\xi,\tau)=\left\{ \begin{array}{ll}
\displaystyle \sqrt{\|\xi\|^2+\tau^2}              & \textrm{if } c^{-1} \|\xi\| \leq \tau, \\
\displaystyle\frac{\tau+c\|\xi\|}{\sqrt{c^2+1}}    & \textrm{if } -c\|\xi\|\leq \tau \leq c^{-1}\|\xi\|,\\
0                                                  & \textrm{if } \tau \leq -c\|\xi\|,
\end{array}\right.
$$
for $(\xi,\tau)\in \mR^d\times \mR$.
Then we have:

\begin{corollary}
Let $f_1, \dots, f_n\in \calE'(\Gamma)$. There exist $g_1,\dots,
g_n\in\calE'(\Gamma)$ such that
$$
f_1 \ast g_1 + \dots + f_n \ast g_n =\delta
$$
iff there are positive constants $C,N , M$ such that 
\begin{equation}
\label{CCCC}
\textrm{for all }z\in\mC^d\textrm{, } 
|\widehat{f}_1(z)|+\dots+|\widehat{f}_n(z)|\geq 
C (1+ \|z\|^2)^{-N} e^{-M \Phi(\textrm{\em Im}(z))}.
\end{equation} 
\end{corollary}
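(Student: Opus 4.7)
The plan is to apply Theorem~\ref{theorem_1} with $C=\Gamma$; everything reduces to the purely geometric claim that the supporting function of $B=\Gamma\cap\{(x,t)\in\mR^{d+1}:\|(x,t)\|\leq 1\}$ is precisely the piecewise function $\Phi(\xi,\tau)$ written above. Once this is verified, the stated estimate is just \eqref{CC} with $H$ replaced by $\Phi$ (noting that $z$ must be understood as ranging over $\mC^{d+1}$).

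To compute $H(\xi,\tau)=\sup_{(x,t)\in B}\bigl(\langle x,\xi\rangle+t\tau\bigr)$, I would split into three cases according to the position of $(\xi,\tau)$ relative to $\Gamma$ and its polar cone. In the first case, $\tau\geq c^{-1}\|\xi\|$, the unconstrained maximizer of the linear functional over the unit ball, namely $(\xi,\tau)/\sqrt{\|\xi\|^2+\tau^2}$, already lies inside $\Gamma$; so the cone constraint is inactive and $H(\xi,\tau)=\sqrt{\|\xi\|^2+\tau^2}$. In the opposite extreme $\tau\leq -c\|\xi\|$, a direct estimate shows $\langle x,\xi\rangle+t\tau\leq ct\|\xi\|+t\tau=t(c\|\xi\|+\tau)\leq 0$ for every $(x,t)\in\Gamma$, so the supremum is attained at the origin and $H(\xi,\tau)=0$.

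The interesting case is the intermediate wedge $-c\|\xi\|\leq\tau\leq c^{-1}\|\xi\|$. Here the sup over $\Gamma$ alone is $+\infty$ (since $c\|\xi\|+\tau\geq 0$ and $t$ is unbounded), so the ball constraint is active; on the other hand the unconstrained ball maximizer is not in $\Gamma$, so the cone constraint is active too. Thus the maximizer lies on the intersection of the lateral boundary $\|x\|=ct$ with the sphere $\|x\|^2+t^2=1$, which forces $t=1/\sqrt{c^2+1}$ and $\|x\|=c/\sqrt{c^2+1}$. Optimizing $\langle x,\xi\rangle$ over unit vectors in the direction of $x$ gives $x=(c/\sqrt{c^2+1})\xi/\|\xi\|$, and substituting yields $H(\xi,\tau)=(c\|\xi\|+\tau)/\sqrt{c^2+1}$, matching $\Phi$.

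The main obstacle, if any, is the bookkeeping in the intermediate wedge: one must argue that both the cone and sphere constraints are active simultaneously and rule out interior critical points. The cleanest way I see is the two-step squeeze above (cone inactive $\Rightarrow$ Case~1; ball inactive $\Rightarrow$ supremum infinite or zero). Given $H=\Phi$, Theorem~\ref{theorem_1} immediately gives the equivalence between the B\'ezout identity $f_1\ast g_1+\cdots+f_n\ast g_n=\delta$ in $\calE'(\Gamma)$ and the lower bound \eqref{CCCC}, completing the proof.
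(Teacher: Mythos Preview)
Your proposal is correct and follows exactly the intended route: the corollary is an immediate specialization of Theorem~\ref{theorem_1} once one knows that the supporting function of $B=\Gamma\cap\{\|(x,t)\|\le 1\}$ equals $\Phi$. The paper in fact gives no proof at all here---it simply records the formula for $\Phi$ and states the corollary---so your write-up supplies strictly more than the paper does.

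Your three-case computation of $H$ is sound. The only place where a referee might ask for one more line is the middle wedge: you argue that both constraints are active and then jump to the intersection circle $\|x\|=ct$, $\|x\|^2+t^2=1$. A slightly tighter justification is to reduce first to the $2$-variable problem (set $x=r\,\xi/\|\xi\|$, so the objective becomes $r\|\xi\|+t\tau$ with $0\le r\le ct$, $r^2+t^2\le 1$), observe that $r=\min(ct,\sqrt{1-t^2})$ at the optimum since $\|\xi\|>0$, and then check that on the arc $r=\sqrt{1-t^2}$, $t\ge 1/\sqrt{c^2+1}$, the derivative $-t\|\xi\|/\sqrt{1-t^2}+\tau$ is nonpositive precisely because $\tau\le c^{-1}\|\xi\|$. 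This pins the maximizer at $t=1/\sqrt{c^2+1}$ without any appeal to ``both constraints active'' heuristics. But this is polish, not a gap. Your observation that $z$ should range over $\mC^{d+1}$ is also correct; the $\mC^d$ in the displayed inequality is a typo in the paper.
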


\section{Answer to Yamamoto's question}

We remark that Theorem~\ref{theorem_1} answers an open question of
Y.~Yamamoto; see question number 2 \cite[p.282]{Yam}. There it was
asked if for $f_1,f_2 \in \calE'(\mR)$, the condition that
$\widehat{f}_1,\widehat{f}_2$ have no common zeros in $\mC$ is enough
to guarantee that there are $g_1,g_2\in \calE'(\mR)$ such that
$f_1\ast g_1+f_2\ast g_2=\delta$. 

In light of Theorem~\ref{theorem_1} above, the answer is no, since our
analytic condition \eqref{CCC} (in the case when $d=1$) is not
equivalent to (and is stronger) than the condition that there is no
common zero, as seen in the following example.  (The idea behind this
example is taken from \cite{Mei}.)

\begin{example}
Let $c\in \mR_+$ be the {\em Liouville constant}, that is,
$$
c=\displaystyle \sum_{n=1}^{\infty}\frac{1}{10^{n!}}.
$$
(See for example, \cite{Apo}.) 
Then it can be seen that $c$ is irrational. Also, for $K\in \mN$, with
$p_K$, $q_K$ defined by
$$
p_K=10^{K!} \sum_{k=1}^K \frac{1}{10^{k!}},\quad q_{K}=10^{K!},
$$
we have that 
\begin{eqnarray}
  \nonumber 
  0<\left| c-\frac{p_K}{q_K}\right| 
  &=& \sum_{k=K+1}^\infty  \frac{1}{10^{k!}}
  = \frac{1}{10^{(K+1)!}}+\frac{1}{10^{(K+2)!}}+\frac{1}{10^{(K+3)!}}+\dots
  \\\nonumber
  &\leq & \frac{1}{10^{(K+1)!}}\cdot \sum_{m=0}^\infty \frac{1}{10^m}
  = \frac{1}{10^{(K+1)!}} \cdot \frac{10}{9} 
  =\frac{10/9}{(10^{K!})^{K} 10^{K!}}
  \\
\label{L}
&\leq &\frac{1}{(10^{K!})^K}=\frac{1}{q_K^K}.
\end{eqnarray}
Take $f_1=\delta-\delta_{c}$ and $f_2=\mathbf{1_{[0,1]}}$, where
$\mathbf{1_{[0,1]}}$ denotes the indicator function of the interval
$[0,1]$.  Then $f_1, f_2$ belong to $\calE'(\mR)$ and we have that
$$
\widehat{f}_1(z)=1-e^{-icz},
\quad
\widehat{f}_2(z)=\left\{\begin{array}{ll}
\displaystyle\frac{e^{-iz}-1}{-iz}& \textrm{if }z\neq 0\\
\displaystyle i \frac{de^{-iz}}{dz}\bigg|_{z=0}=1 &\textrm{if }z= 0.\end{array} \right. 
$$
Then $\widehat{f}_1$ and $\widehat{f}_2$ have no common zeros
(otherwise $c$ would be rational!). We now show that \eqref{CCC} does
not hold.  Suppose, on the contrary that there exist $C,N,M$ positive
such that
\begin{equation}
\label{eq_CCCC}
|\widehat{f}_1(z)|+|\widehat{f}_2(z)|\geq C(1+|z|^2)^{-N}e^{-M |\textrm{Im}(z)|}
\end{equation}
for all $z\in \mC$. If $z=2\pi q_K$, then 
we have $\widehat{f}_2(2\pi q_K)=0$. On the other hand,
$$
|\widehat{f}_1(2\pi q_K)|=|1-e^{-ic(2\pi q_K)}|= |\sin (\pi c q_K)|=|\sin (\pi cq_K-\pi p_K)|.
$$
The inequality \eqref{eq_CCCC} now yields that 
$$
|\sin (\pi (cq_K-p_K))|\geq C(1+4\pi^2 q_K^2)^{-N}.
$$
But $|\sin\Theta |\leq|\Theta|$ for all real $\Theta$, and so we obtain 
$$
\pi q_K \left|c-\frac{p_K}{q_K}\right| \geq C ( 1+4\pi^2 q_K^2)^{-N}. 
$$
In light of  \eqref{L}, we now obtain 
$$
\pi q_K \frac{1}{q_K^K} \geq C ( 1+4\pi^2 q_K^2)^{-N}, 
$$
and rearranging, we have 
$$
\pi \frac{ q_K  (1+ 4\pi^2 q_K)^{N}}{q_K^K}\geq C.
$$
Passing the limit $K\rightarrow \infty$, we arrive at the
contradiction that $0 \geq C $.

We remark that in this example $f_1,f_2$ actually belong to
$\calE'(\mR_+)$, and with the same argument given above, it can be
seen that $\widehat{f}_1, \widehat{f}_2$ don't satisfy \eqref{CCCC'}
either.  This also gives another example answering question number 1
in \cite{Yam}, namely, for $f_1,f_2$ in $\calE'(\mR_+)$, whether the
condition that $\widehat{f}_1,\widehat{f}_2$ have no common zeros is
enough to guarantee that there are $g_1,g_2\in \calE'(\mR_+)$ such
that $f_1\ast g_1+f_2\ast g_2=\delta$.
\end{example}

\end{document}